\newtheorem{theorem}{Theorem}
\newtheorem{lemma}{Lemma}
\newcommand{\tilo}{\mathrm{q}}
\renewcommand{\u}{\mathbf{u}}
\renewcommand{\v}{\mathbf{v}}
\renewcommand{\d}{\mathrm{d}}
\newcommand{\y}{\mathbf{y}}
\newcommand{\x}{\mathbf{x}}
\newcommand{\z}{\mathbf{z}}
\newcommand{\real}{\mathbb{R}}
\newcommand{\loc}{\scriptsize{loc}}
\newcommand{\vare}{\varepsilon}
\newcommand{\ds}{\displaystyle}
\DeclareMathOperator{\dv}{div}
\DeclareMathOperator{\curl}{curl}
\title{Confinement of vorticity for the 2D Euler-$\alpha$ equations}
\author{David M. Ambrose \and Milton C. Lopes Filho \and Helena J. Nussenzveig Lopes}
\begin{document}

\maketitle

\begin{abstract}
In this article we consider weak solutions of the Euler-$\alpha$ equations in the full plane. We take, as initial unfiltered vorticity, an arbitrary nonnegative, compactly supported, bounded Radon measure. Global well-posedness for the corresponding initial value problem is due M. Oliver and S. Shkoller.  We show that, for all time, the support of the unfiltered vorticity is contained in a disk whose radius grows no faster than $\mathcal{O}((t\log t)^{1/4})$. This result is an adaptation of the corresponding result
for the incompressible 2D Euler equations with initial vorticity compactly supported, nonnegative, and $p$-th power integrable, $p>2$, due to D. Iftimie, T. Sideris and P. Gamblin and, independently, to Ph. Serfati.
\end{abstract}

\section{Introduction}

In this work we study the initial-value problem for the two-dimensional incompressible Euler-$\alpha$ equations, $\alpha > 0$ fixed, given by:
\begin{equation}\label{alphaEulerVel}
\left\{
\begin{array}{ll}
\partial_t \v + \u \cdot \nabla \v + \sum_{j=1}^2 v_j \nabla u_j = -\nabla p, & \mbox{ in } \real^2 \times \real_+,\\
\dv \u = 0,  & \mbox{ in } \real^2 \times \real_+,\\
\v= \u - \alpha^2 \Delta \u,  & \mbox{ in } \real^2 \times \real_+,\\
\u(\cdot,0)=\u_0,  & \mbox{ at } \real^2 \times \{0\}.
\end{array}
\right.
\end{equation}
Above $\u$ is called the {\it filtered} velocity while $\v$ is the {\it unfiltered} velocity. Taking the curl of these
equations yields the vorticity formulation which, in two space dimensions, becomes:

\begin{equation} \label{the-equation}
\left\{
\begin{array}{ll}
\partial_{t}\tilo + \u \cdot\nabla\tilo = 0, & \mbox{ in } \real^2 \times \real_+ \\
\tilo = \curl \v, & \mbox{ in } \real^2 \times \real_+\\
\curl \u=(\mathbb{I}-\alpha^2\Delta)^{-1}\, \tilo \equiv \omega, & \mbox{ in } \real^2 \times \real_+,\\
\dv \u=0, & \mbox{ in } \real^2 \times \real_+,\\
\tilo(\cdot,0)=\tilo_0,  & \mbox{ at } \real^2 \times \{0\}.
\end{array}
\right.
\end{equation}
This is an instance of an active scalar transport equation (see \cite{peterstuff}).

We call $\omega$ and $\tilo$ filtered and unfiltered vorticity, respectively and we consider initial data $\u_0$ such that $\tilo_0 \in \mathcal{BM}_{c,+}(\real^2)$. Global well-posedness for \eqref{the-equation} with this initial data was established by Oliver and Shkoller in \cite{OS2001}.

For $\tilo \in \mathcal{BM}(\real^2)$, we refer to the integrals of the quantities $\tilo$, $\x \tilo$ and $|\x|^2 \tilo$ in the full plane as mass, center of mass and moment of inertia of $\tilo$, respectively.

The purpose of this article is to prove the following result.

\begin{theorem} \label{mainthm}
Let $\tilo_0$ be a nonnegative, compactly supported bounded Radon measure. Let $R_0>0$ be the radius of a disk, centered at the origin, which contains the support of $\tilo_0$. Set $\tilo=\tilo(\x,t) \in L^\infty(0,T;\mathcal{BM}_{+,c}(\real^2))$ to be the unique global weak solution of \eqref{the-equation},  with $\tilo(\x,0)=\tilo_0$. Then, there exist $C>0$, depending on $R_0$, on the mass, center of mass and moment of inertia of $\tilo_0$, such that, for every $t \geq 0$, the support of $\tilo$ is contained in the disk, centered at the origin, with radius
\[R(t) = 8R_0 + C[t(\log (2+t))]^{1/4}.\]
\end{theorem}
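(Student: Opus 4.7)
The plan is to adapt the Iftimie-Sideris-Gamblin confinement argument to the active scalar transport governed by \eqref{the-equation}, leveraging three features of the Euler-$\alpha$ system. First, $\tilo$ is transported by the divergence-free, bounded velocity $\u = K_\alpha \ast \tilo$, where the modified Biot-Savart kernel $K_\alpha := K \ast G_\alpha$ (with $K$ the 2D Biot-Savart kernel and $G_\alpha$ the Green's function of $\mathbb{I}-\alpha^2\Delta$) is bounded on $\real^2$, is odd, satisfies $K_\alpha(\z)\cdot\z \equiv 0$, and decays as $|K_\alpha(\z)| \lesssim \min(|\z|/\alpha^2,\, 1/|\z|)$ with $|\nabla^k K_\alpha(\z)| \lesssim 1/|\z|^{k+1}$ for $|\z| \gtrsim \alpha$. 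Second, $\u \in L^\infty$, so its flow $X(t;\cdot)$ is classical and $\mathrm{supp}\,\tilo(\cdot,t) = X(t;\mathrm{supp}\,\tilo_0)$. Third, the mass $M_0$, center of mass $M_c$, and moment of inertia $M_2$ of $\tilo$ are conserved in time.

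\emph{Conservation.} Testing $\partial_t\tilo + \u\cdot\nabla\tilo = 0$ against $1$, $\x$, $|\x|^2$ and using $\dv \u = 0$ reduces conservation of $M_c$ and $M_2$ to the vanishing of $\int u_i\,\tilo\,d\x$ and $\int(\x\cdot\u)\,\tilo\,d\x$. Substituting $\u = K_\alpha\ast\tilo$ and symmetrizing against $\tilo\otimes\tilo$, both follow from the antisymmetry $K_\alpha(-\z)=-K_\alpha(\z)$ and the orthogonality $\z\cdot K_\alpha(\z)=0$; the latter comes from $K_\alpha=\nabla^\perp\Phi_\alpha$ with $\Phi_\alpha = G_0\ast G_\alpha$ radial ($G_0$ the 2D Newtonian potential).

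\emph{Trajectory estimate.} Fix $\alpha_0 \in \mathrm{supp}\,\tilo_0$ and set $X(t) := X(t;\alpha_0)$, $R(t) := |X(t)|$. While $R(t) \le 8R_0$, the trivial bound $R(t) \le R_0 + \|\u\|_\infty t$ suffices. For $R(t) > 8R_0$,
\[
\tfrac{d}{dt} R^2 = 2\,X\cdot\u(X,t) = 2\int X\cdot[K_\alpha(X-\y)-K_\alpha(X)]\,\tilo(\y,t)\,d\y,
\]
using $X\cdot K_\alpha(X) = 0$. After a harmless initial translation to arrange $M_c = 0$, split the $\y$-integral at $|\y| = R/2$. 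On $\{|\y| < R/2\}$ one uses a second-order Taylor expansion of $K_\alpha$ at $X$: combined with $|\nabla^2 K_\alpha| \lesssim 1/R^3$ on the convex hull and $|\int_{|\y|<R/2} \y\,\tilo| = |\int_{|\y|\ge R/2} \y\,\tilo| \le 2M_2/R$ (Chebyshev), this yields a contribution $\lesssim M_2/R^2$ to $|X\cdot\u|$. On $\{|\y| \ge R/2\}$ one combines $\tilo(\{|\y|\ge R/2\}) \le 4M_2/R^2$ with a dyadic annular decomposition in $|X-\y|$, interpolating between $|K_\alpha| \lesssim 1/\alpha$ on the smallest scale and $|K_\alpha| \lesssim 1/|\z|$ on larger scales; the sum over dyadic shells yields $\int_{|\y|\ge R/2} |K_\alpha(X-\y)|\tilo\,d\y \lesssim M_2 \log(2+R)/R^2$, which after the tangential cancellation contributes $\lesssim M_2 \log(2+R)/R^2$ to $|X\cdot\u|$.

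\emph{Gronwall and integration.} Combining, $\tfrac{d}{dt} R^2 \le C\log(2+R)/R^2$ for $R \ge 8R_0$, with $C$ depending only on $R_0$, $M_0$, $M_c$, $M_2$. Setting $Y = R^2$, this becomes $Y\,dY \le C \log(2+\sqrt Y)\,dt$; integrating (and bootstrapping $\log\sqrt Y \le \log(2+t)$) gives $Y^2 \lesssim t\log(2+t)$, hence $R(t) \lesssim [t\log(2+t)]^{1/4}$, yielding the advertised bound.

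\emph{Main obstacle.} The crux is the intermediate range $|\y| \sim R$, where the Taylor expansion fails and the crude $\|K_\alpha\|_\infty$ estimate alone would give only the weaker exponent $1/3$. The $\alpha$-regularization is precisely what makes the two pointwise bounds on $K_\alpha$ simultaneously available, and the dyadic interpolation between them — replacing the $L^p$ cutoff of the original Iftimie-Sideris-Gamblin proof for 2D Euler — is what produces the logarithmic factor responsible for the $1/4$ exponent.
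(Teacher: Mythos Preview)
Your near-region estimate is where the argument breaks down. From Chebyshev you get $\tilo(\{|\y|\ge R/2\}) \le 4M_2/R^2$, and since $\|K_\alpha\|_\infty<\infty$ this indeed gives $\int_{|\y|\ge R/2}|K_\alpha(X-\y)|\,\tilo\,d\y \lesssim M_2/R^2$ (the dyadic decomposition adds nothing: with only a total-mass bound on $\{|\y|\ge R/2\}$ each shell could carry the full mass, and the geometric sum just reproduces the $\|K_\alpha\|_\infty$ bound). But there is no ``tangential cancellation'' that upgrades this to a contribution $\lesssim M_2\log R/R^2$ to $X\cdot\u$. Concretely, place the admissible mass $4M_2/R^2$ at $\y_0=X+\varepsilon X^\perp/|X|$; then $X\cdot K_\alpha(X-\y_0)=\frac{R}{2\pi}\,\frac{\gamma_\alpha(\varepsilon)}{\varepsilon}$, which is of order $R$ (Lemma~\ref{gammaalphaprops}, item~1), so the near-region contribution to $|X\cdot\u|$ is $\sim M_2/R$, not $M_2/R^2$. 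With $\tfrac{d}{dt}R^2\lesssim 1/R$ you recover only Marchioro's $t^{1/3}$ bound.

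What is missing is precisely the heart of the Iftimie--Sideris--Gamblin argument (their Proposition~2.1), which the paper adapts: one must show that the far-field mass $\int_{|\y|>r}\tilo(\cdot,t)\,d\y$ decays \emph{faster than any polynomial} in $r$, not just like $r^{-2}$. This is done by introducing the smoothed mass $f_r(t)=\int\eta\big((|\y|^2-r^2)/(\lambda r^2)\big)\tilo(\y,t)\,d\y$, deriving a differential inequality $f_r'\le \frac{C}{\lambda^2 r^4}f_r + (\text{small})$ via a careful $(\x,\y)$--domain decomposition that exploits the symmetry of $K_\alpha$ and the properties of $\gamma_\alpha$, and applying Gronwall with $\lambda\sim 1/\log r$. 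This yields $f_r(t)\lesssim r^{-k}$ for any $k$ once $r\gtrsim (t\log t)^{1/4}$; feeding $k=6$ back into the radial-velocity estimate then gives $|\u\cdot\x/|\x||\lesssim |\x|^{-3}$ outside the confinement region, which is what produces the $1/4$ exponent. The $\alpha$-regularization lets you handle measure data (replacing the $L^p$, $p>2$ hypothesis of ISG), but it does not by itself replace this propagation-of-moments step.
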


This result is an analogue of a corresponding result for the incompressible 2D Euler equations, due to Iftimie, Sideris and Gamblin \cite{ISG1999}, and independently to Serfati \cite{Serfati1998}, valid for vorticity in $L^p_{c,+}$, $p>2$. The proof of Theorem \ref{mainthm} is an adaptation of the proof of Iftimie {\it et alli}.

The Euler-$\alpha$ equations are a regularization of the incompressible Euler equations.
These equations were introduced in the context of averaged fluid models, see \cite{HolmEtAl1998a,HolmEtAl1998b}; if
$\alpha$ is a length scale then the equations describe the flow of an incompressible fluid on spatial scales larger than $\alpha$. The Euler-$\alpha$ equations are the inviscid version of the Lagrangian-averaged Navier-Stokes equations (also known as LANS-$\alpha$), a small-scale closure model for turbulence, and also the inviscid case of the non-Newtonian fluids known as second grade fluids. In addition, the Euler-$\alpha$ equations are an infinite dimensional Hamiltonian system, describing the geodesics in the group of area-preserving diffeomophisms, endowed with a right-invariant metric induced by a parametrized $H^1$-norm on the corresponding Lie algebra, see \cite{MRS2000}.

Confinement, as we study here, is a restriction on the large-time behavior of this system which is closely related to its Hamiltonian nature. It consists of estimating the growth of support of the unfiltered vorticity in terms of some of the conserved quantities associated with the symmetries of the problem.
For the 2D Euler equations the problem of confinement of the support of vorticity was first addressed by C. Marchioro (see \cite{Marchioro1994}), who showed that, for flow in the full plane with bounded, nonnegative vorticity, the diameter of the support cannot grow faster than $\mathcal{O}(t^{1/3})$. (This estimate has been extended to nonnegative $p^{th}$-power integrable vorticities, $p>2$, in \cite{L21998}.) Marchioro's estimate has been  improved to growth no faster than $\mathcal{O}(t^{1/4}\log t)$, see \cite{ISG1999}; see also
\cite{Serfati1998}, where an estimate of the form $\mathcal{O}(t^{1/4}\log \circ \ldots \circ \log t)$ was established. As we will see, besides its Hamiltonian nature, the Euler-$\alpha$ system has conserved quantities which parallel those of the Euler equations, including the first three moments of vorticity and the Hamiltonian itself. The confinement problem has been studied, additionally, for a family of modified surface quasi-geostrophic equations, see \cite{Garra2016}. The technique used was a different adaptation of the work of \cite{ISG1999}.

The relationship between confinement and the conserved quantities is a subtle one. For instance, the upgrade from Marchioro's cubic root estimate to near fourth root followed from the use of the conservation of the center of vorticity, in addition to the total mass and the moment of inertia which were used originally by Marchioro. Another instance illustrating how subtle confinement may be was showcased in the study of confinement in exterior domains, see \cite{Marchioro1996,ILN07}. In a general exterior domain, the best known confinement results yield square root growth in time of the diameter of the support of vorticity. This is due to the loss of symmetry, which implies loss of both the conservation of the center of mass and of the moment of inertia. The estimate is improved to cubic root growth in the special case of the disk, because the moment of inertia is then conserved, recovering fourth root only for even initial vorticity in the exterior of a disk, where both the center of vorticity and the moment of inertia are conserved. In the present work, we extend the fourth root confinement estimate to the Euler-$\alpha$ equations. It was by no means a foregone conclusion whether we would be able to recover this estimate, to improve on it, or to lose it partially or completely. Investigating this issue was the main motivation behind the present work.

The Euler-$\alpha$ equations are an active scalar transport system, where the unfiltered vorticity is transported by a velocity generated by a regularized Biot-Savart law. The confinement analysis for 2D Euler was restricted to vorticities in $L^p$, $p>2$, which is the condition for the velocity to be bounded. For the Euler-$\alpha$ equations, the transporting velocity is bounded, even if the unfiltered vorticity is a bounded measure. We note that our confinement analysis extends to the bounded measure case.

Transport of vorticity by a desingularized velocity is the basic idea behind vortex blob methods, a well-known numerical method for computational modeling of 2D flows. The Euler-$\alpha$ equations can be seen as a specific, somewhat impractical instance of vortex blob, of particular interest due to its geometric significance. We provide conditions under which the confinement estimate obtained for Euler-$\alpha$ can be extended to vortex blob approximations.

Finally, a remark concerning the significance of confinement estimates. An estimate that applies exclusively to flows with distinguished sign vorticities is of limited physical interest. Without that assumption, it is easy to construct examples where the diameter of the support of vorticity grows linearly in time, using vortex dipoles. However, in \cite{MP93}, Marchioro and Pulvirenti used the idea behind confinement estimates to establish the asymptotic validity of the point vortex model as an approximation for the dynamics of flows with sharply concentrated vorticity, a question with definite physical relevance. This was extended to the modified surface quasi-geostrophic equations in \cite{CGM2013}.

The remainder of this paper is organized as follows.
In the next section we will recall basic facts about Bessel potentials, which will be needed further ahead. In the third section we discuss conserved quantities for the Euler-$\alpha$ system of equations and we derive basic \textit{a priori} estimates. The fourth section contains the proof of our confinement result. In the final section we make some concluding remarks and discuss a couple of open problems.

In what follows we write $\x^{\perp}$ to mean $\x^{\perp} = (a,b)^{\perp}=(-b,a)$.

\section{Bessel potentials}

In this article, we require detailed knowledge on the operator $(\mathbb{I}-\alpha^2 \Delta)^{-1}$, by virtue of the relation between filtered and unfiltered vorticity. This is a multiplier operator and, therefore, it can be written as convolution against a kernel, which we denote by $G_{\alpha}=G_\alpha(\x)$. The kernel $G_{\alpha}$ is, in turn, related to the classical Bessel potential $\mathcal{J}_2=\mathcal{J}_2(\x)$, which is the kernel associated to the operator $(\mathbb{I} - \Delta)^{-1}$; in fact it is an easy calculation to show that
\begin{equation} \label{GalphaJ2}
G_{\alpha}(\x)=\frac{1}{\alpha^{2}} \mathcal{J}_2\left(\frac{\x}{\alpha}\right).
\end{equation}
We enumerate the relevant properties of $G_\alpha$, which can be easily derived from the corresponding properties of $\mathcal{J}_2$.
Fix $\alpha > 0$. In view of \eqref{GalphaJ2} the following hold true:
\begin{enumerate}
\item[P1)] $G_{\alpha}$ is radially symmetric; i.e. there exists $g_\alpha = g_\alpha(r)$ such that $G_\alpha(\x)=g_\alpha(|\x|)$;
\item[P2)] $G_{\alpha}$ is positive and has unit integral, independent of $\alpha$. This means that $g_\alpha > 0$ and
\begin{equation} \label{galphaposandint1}
 2\pi \int_0^\infty sg_\alpha (s) \, ds = 1;
\end{equation}
\item[P3)] $G_{\alpha}$ is exponentially decaying as $|\x|\to \infty$. More precisely, for any  $M>0$, there exist positive constants
$c_{1}$ and $c_{2}$ such that
\begin{equation} \label{Galphaexpdecay}
|g_{\alpha}(|\x|)| \leq  c_{1} e^{-c_{2}|\x|}, \mbox{   whenever } \ |\x|>M;
\end{equation}
\item[P4)] $G_\alpha$ has a logarithmic singularity at $0$, i.e., there exist $c_3$, $c_4 >0$ such that, if $|\x|<1$ then
\begin{equation} \label{Galphalogat0}
c_{3} \log \left(\frac{1}{|\x|}\right) \leq |g_{\alpha}(|\x|)| \leq  c_{4} \log \left(\frac{1}{|\x|}\right).
\end{equation}
\end{enumerate}
We refer the reader to Chapter V.3.1 of \cite{Stein1970} for details about P1, P2, and P3, and Chapter 1, expression (1.2.23) of \cite{AH1999} for P4.

Next, we recall the Biot-Savart law, which expresses a divergence-free velocity in terms of its curl:
\[\u = \u(\x) = K \ast \omega (\x) = \int \frac{(\x - \y)^{\perp}}{2\pi|\x-\y|^2}\omega(\y)\,\d\y.\]
Above, $K = K(\x) = \x^\perp / (2\pi|\x|^2)$ is the convolution kernel associated to the multiplier operator $\nabla^{\perp}\Delta^{-1}$.

Since $\omega = G_\alpha \ast \tilo$, we know that $\u = K \ast G_\alpha \ast \tilo$. Set $K_\alpha = K \ast G_\alpha$
and let us seek a representation formula for $K_\alpha$. Note that $K_\alpha$ is the velocity due to a vorticity
given by $G_\alpha(\cdot)$, or, equivalently, due to the radially symmetric vorticity  $g_\alpha (|\cdot|)$. Hence,
the corresponding Biot-Savart law takes the form:
\[K_\alpha (\x) = \frac{\x^\perp}{|\x|^2}\int_0^{|\x|}s g_\alpha(s)\,ds;\]
see Chapter 2, expression (2.14) in \cite{BM2002} for a proof of this fact.

Set
\[\gamma_\alpha = \gamma_\alpha (r) \equiv 2\pi \int_0^r s g_\alpha (s) \, ds.\]
Clearly, for $r>0$,
\begin{equation} \label{gammaalphabdd}
0 < \gamma_\alpha \leq 1.
\end{equation}

We have obtained that
\begin{equation} \label{Kalpha}
K_\alpha = K_\alpha(\x) = K(\x)\gamma_\alpha(|\x|).
\end{equation}
and, hence,
\begin{equation} \label{UintermsofTILO}
\u=\u(\x)= K_\alpha \ast \tilo (\x) = \int K(\x-\y)\gamma_\alpha(|\x-\y|)\tilo(\y)\,\d\y.
\end{equation}

Some of the properties we require concerning $g_{\alpha}$ can be conveniently expressed in terms of $\gamma_{\alpha}$.
We collect those properties in the following result.

\begin{lemma} \label{gammaalphaprops}
We have:
\begin{enumerate}
\item $r\mapsto \displaystyle{\frac{1}{r}}\gamma_\alpha (r)$ is bounded on $[0,\infty)$;
\item there exist positive constants $c$, $\widetilde{c}$, such that $|1-\gamma_\alpha (r)| \leq c e^{-\widetilde{c}r}$, for every $r>0$;
\item $r\mapsto r(1-\gamma_\alpha (r))$ is bounded on $[0,\infty)$.
\end{enumerate}
\end{lemma}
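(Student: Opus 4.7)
The plan is to derive all three estimates from the identity
\begin{equation}\label{tailid}
1-\gamma_\alpha(r)=2\pi\int_r^\infty s g_\alpha(s)\,ds,
\end{equation}
which is an immediate consequence of P2, combined with the two qualitative facts already recorded: the logarithmic singularity of $g_\alpha$ at the origin (P4) and its exponential decay at infinity (P3).

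For item (1), I would split $[0,\infty)$ at $r=1$. On $[1,\infty)$ the universal bound $\gamma_\alpha(r)\leq 1$ from \eqref{gammaalphabdd} gives $\gamma_\alpha(r)/r\leq 1$ at once. On $(0,1]$ I would invoke P4: since $s g_\alpha(s)\leq c_4 s\log(1/s)$ there, an elementary evaluation of $\int_0^r s\log(1/s)\,ds$ yields $\gamma_\alpha(r)/r=\mathcal{O}(r\log(1/r))$, which is bounded (and in fact tends to $0$ as $r\to 0^+$).

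For item (2), I would combine \eqref{tailid} with P3. With $M$ the cutoff from P3, for $r\geq M$ the bound $|g_\alpha(s)|\leq c_1 e^{-c_2 s}$ gives
\[|1-\gamma_\alpha(r)|\leq 2\pi c_1\int_r^\infty s e^{-c_2 s}\,ds\leq C(1+r)e^{-c_2 r}.\]
Choosing any $\widetilde{c}\in(0,c_2)$ absorbs the polynomial prefactor into $e^{-\widetilde{c}r}$ at the cost of enlarging the constant. For $r\in[0,M)$ the trivial bound $|1-\gamma_\alpha(r)|\leq 1\leq e^{\widetilde{c}M}e^{-\widetilde{c}r}$ closes the argument. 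Item (3) is then immediate: multiply by $r$ and note that $r e^{-\widetilde{c}r}$ is bounded on $[0,\infty)$.

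No step presents real difficulty. The only care required is in recognizing that the singularity of $g_\alpha$ at $0$ governs item (1), whereas the exponential decay at infinity drives items (2) and (3); each regime must be treated separately and the pieces patched to obtain a uniform bound on the full halfline.
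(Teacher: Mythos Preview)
Your proposal is correct and follows exactly the route the paper indicates: the paper's own proof is the single sentence ``Item 1 is a consequence of property P4. Item 2 follows from property P3 and item 3 is a consequence of item 2,'' and what you have written is precisely the detail behind that sentence (together with the obvious use of \eqref{gammaalphabdd} on $r\geq 1$ for item 1, which the paper leaves implicit).
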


\begin{proof}
Item 1 is a consequence of property P4. Item 2 follows from property P3 and item 3 is a consequence of item 2.
\end{proof}


\section{Conserved Quantities and some {\it a priori} estimates}

We use the material discussed in section 2 to obtain some useful conserved quantities and {\it a priori} estimates for Euler-$\alpha$ dynamics.
The conserved quantities involve low moments of vorticity, and, for convenience, we describe these quantities as the mass, center of mass and moment of inertia for both unfiltered and filtered vorticity. We recall the antisymmetry of $K$, i.e., $K(-\z)=-K(\z)$.

\begin{lemma} \label{consqtties}
Let $\tilo$ be a smooth solution of \eqref{the-equation},  and recall $\u = K_\alpha \ast \tilo $. Assume that $\tilo(\cdot, t)$ is compactly supported, for each $t>0$. Then
\begin{eqnarray}
\label{totmass}
\int \tilo(\x,t)\,\d\x = \int \tilo(\x,0)\,\d\x \equiv m_0; \\
\label{centermass}
\int \x \tilo(\x,t)\,\d\x = \int \x\tilo(\x,0)\,\d\x\equiv Z_0; \\
\label{mominertia}
\int |\x|^2\,\tilo(\x,t)\,\d\x = \int |\x|^2\,\tilo(\x,0)\,\d\x \equiv i_0.
\end{eqnarray}
\end{lemma}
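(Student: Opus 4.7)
My plan is to differentiate each of the three integrals in time and use the vorticity equation, the divergence-free condition and the symmetries of $K_\alpha$.

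I would first rewrite the transport equation in conservation form: since $\dv \u = 0$, we have
\[\partial_t \tilo + \dv(\u \tilo) = 0.\]
Because $\tilo(\cdot,t)$ is assumed compactly supported and $\u$ is continuous (bounded, in fact, by Lemma \ref{gammaalphaprops}(1) combined with \eqref{UintermsofTILO}), all boundary terms arising from integration by parts against polynomial weights vanish. For \eqref{totmass}, I would integrate the equation over $\real^2$, obtaining $\frac{d}{dt}\int \tilo\,\d\x = -\int \dv(\u \tilo)\,\d\x = 0$, which gives conservation of mass.

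For \eqref{centermass}, I would multiply by $\x$ and integrate by parts, obtaining
\[\frac{d}{dt}\int \x\, \tilo(\x,t)\,\d\x = \int \u(\x,t)\,\tilo(\x,t)\,\d\x.\]
Using \eqref{UintermsofTILO}, the right-hand side becomes a double integral $\iint K_\alpha(\x-\y)\tilo(\x)\tilo(\y)\,\d\x\,\d\y$. Swapping $\x$ and $\y$ and using the antisymmetry $K_\alpha(-\z)=-K_\alpha(\z)$ (which follows from the antisymmetry of $K$ and the fact that $\gamma_\alpha(|\cdot|)$ is even) shows that this integral equals its own negative and is therefore zero.

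For \eqref{mominertia}, I would multiply by $|\x|^2$ and integrate by parts to get
\[\frac{d}{dt}\int |\x|^2 \tilo(\x,t)\,\d\x = 2\int \x\cdot \u(\x,t)\,\tilo(\x,t)\,\d\x = 2\iint \x\cdot K_\alpha(\x-\y)\,\tilo(\x)\tilo(\y)\,\d\x\,\d\y.\]
Symmetrizing in $\x \leftrightarrow \y$ and again using the antisymmetry of $K_\alpha$, the double integral equals $\iint (\x-\y)\cdot K_\alpha(\x-\y)\,\tilo(\x)\tilo(\y)\,\d\x\,\d\y$. The key structural fact is now that \eqref{Kalpha} forces $K_\alpha(\z)$ to be parallel to $\z^\perp$, so $\z\cdot K_\alpha(\z)\equiv 0$, killing the integrand pointwise.

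There is no serious obstacle here; the only subtlety is making sure the kernel-based argument lines up correctly. In particular, the proof uses exactly the two structural features of $K_\alpha$ inherited from the classical Biot--Savart kernel: oddness and orthogonality to $\x$. The radial cutoff factor $\gamma_\alpha$ does not destroy either property, which is why all three Euler conservation laws carry over verbatim to the Euler-$\alpha$ system.
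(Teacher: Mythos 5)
Your proposal is correct and follows essentially the same route as the paper: integrate the transport equation (in conservation form) against $1$, $\x$ and $|\x|^2$, rewrite the resulting velocity integrals as double integrals via \eqref{UintermsofTILO}, and kill them by symmetrizing in $\x\leftrightarrow\y$ using the antisymmetry of the kernel and, for the moment of inertia, the identity $\z\cdot K(\z)=0$. The only differences are cosmetic (you make explicit that $\gamma_\alpha(|\cdot|)$ being radial preserves the oddness of $K_\alpha$, and you note the boundedness of $\u$ to justify the integrations by parts), so no further comment is needed.
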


\begin{proof}
The conservation of mass expressed in \eqref{totmass} follows immediately by integrating \eqref{the-equation} in space.

To see that the center of mass is conserved multiply \eqref{the-equation} by $\x$ and then integrate in space. We find, after integrating by
parts and using \eqref{UintermsofTILO},
\[\frac{d}{dt}\int \x \tilo (\x,t) \, \d\x = \int (\u \tilo)(\x,t) \, \d\x \]
\[
= \int\int \frac{(\x - \y)^{\perp}}{2\pi|\x-\y|^2}\gamma_\alpha(|\x-\y|)\tilo(\y,t)\tilo(\x,t)  \,\d\x \d\y= 0,\]
where the vanishing of the last integral can be seen by exchanging $\x$ and $\y$ and using the antisymmetry of the kernel $K$. This implies \eqref{centermass}.

Finally, to show that the moment of inertia is conserved we multiply \eqref{the-equation} by $|\x|^2$ and integrate in space to find, after integrating by parts and inserting \eqref{UintermsofTILO},
\[
\frac{d}{dt}\int |\x|^2\tilo(\x,t)\,\d\x =  2 \int \x \cdot (\u \tilo )(\x,t)\,\d\x \]
\[
= 2\int \x \cdot \int \frac{(\x - \y)^{\perp}}{2\pi|\x-\y|^2}\gamma_\alpha(|\x-\y|)\tilo(\y,t)\tilo(\x,t)  \,\d\x \d\y \]
\[= \int\int (\x-\y)\cdot  \frac{(\x - \y)^{\perp}}{2\pi|\x-\y|^2}\gamma_\alpha(|\x-\y|)\tilo(\y,t)\tilo(\x,t)  \,\d\x \d\y =0,
\]
where we used the antisymmetry of $K$ together with the fact that $\z \cdot K(\z) =0$. We have, thus, established \eqref{mominertia}. This concludes the proof.

\end{proof}

Under the assumptions of Lemma \ref{consqtties}, mass, center of mass and moment of inertia are conserved for the filtered vorticity as well. Indeed, the filtered vorticity is obtained from the unfiltered vorticity by $\omega=G_\alpha\ast\tilo$. It follows from the property P3 and the assumption that $\tilo$ has compact support that $\omega$ is exponentially decaying at infinity.
Thus, from Lemma \ref{consqtties} above together with the relation $\tilo = \omega - \alpha^2 \Delta \omega$, we have:
\begin{equation} \label{massfilt}
\int\omega(\x,t)\,\d\x = \int \tilo(\x,0)\,\d\x = m_0;
\end{equation}
\begin{equation} \label{centmassfilt}
\int \x\omega(\x,t)\,\d\x = \int \x\tilo (\x,0)\,\d\x = Z_0;
\end{equation}
\begin{equation} \label{mominertfilt}
\int |\x|^2\,\omega(\x,t)\,\d\x = \int |\x|^2\,\tilo (\x,0)\,\d\x + 4\alpha^2\int \tilo (\x,0)\,\d\x = i_0 + 4\alpha^2 m_0.
\end{equation}

Next we focus on {\it a priori} estimates, beginning with the bound on the supremum of the filtered velocity.

Fix $T>0$ and assume that $\tilo_0 $ is smooth and compactly supported. We will show that the transporting velocity $\u = K_\alpha \ast \tilo$ is bounded, uniformly in time and, therefore, the support of $\tilo$ remains compact, in space, at any time $t>0$. Recall the inequality
\begin{equation} \label{Dragosestimate}
\|K\ast f\|_{L^\infty(\real^2)} \leq \|f\|_{L^1(\real^2)}^{1-p'/2}\|f\|_{L^p(\real^2)}^{p'/2},
\end{equation}
for any $p>2$, where $p'=p/(p-1)$.
We will use \eqref{Dragosestimate} with $f = G_\alpha \ast \tilo$. We will show that
$\|G_\alpha \ast \tilo\|_{L^1(\real^2)}$ and $\|G_\alpha \ast \tilo\|_{L^p(\real^2)}$, for some $p>2$, are bounded by $\|\tilo\|_{L^1(\real^2)}$.

It is easy to see that
\[\|G_\alpha \ast \tilo\|_{L^1(\real^2)} = \|\tilo\|_{L^1(\real^2)},\]
as $\|G_\alpha\|_{L^1(\real^2)}=1$.

To estimate $\|G_\alpha \ast \tilo\|_{L^p(\real^2)}$, for some $p>2$, we start by recalling that $L^1(\real^2) \subset \mathcal{BM}(\real^2) \subset W^{-s,r}(\real^2)$, for any $s>0$, $r>1$, such that $sr > 2(r-1)$. Since $G_\alpha$ gains two derivatives, see \cite{Stein1970}, it follows that $G_\alpha \ast \tilo \in W^{2-s,r}(\real^2)$, for the same range of $s>0$, $r>1$. We choose $s=1$ and any $r<2$. It follows that $G_\alpha \ast \tilo \in W^{1,r}(\real^2)$ for any $1<r<2$ and, moreover, there exists $C=C_r>0$ such that
\[\|G_\alpha \ast \tilo \|_{W^{1,r}(\real^2)} \leq C_r\|\tilo\|_{L^1(\real^2)}.\]
Finally, we can use the Gagliardo-Nirenberg inequality to conclude that
\[\|G_\alpha \ast \tilo \|_{L^p(\real^2)} \leq C_p\|\tilo\|_{L^1(\real^2)},\]
for $p=r^\ast = 2r/(2-r).$ We note that, since $r<2$, we have $p>2$. Thus, using \eqref{Dragosestimate}, we find
\begin{equation} \label{ubounded}
\|\u\|_{L^\infty(\real^2)} \leq C \|\tilo\|_{L^1(\real^2)}.
\end{equation}
In particular, this completes the proof of (3.4) in \cite{OS2001}.

Next, we  obtain an estimate of the integral of $\omega$ near infinity in terms of the corresponding integral of $\tilo$. For the remainder of this paper we assume that $\tilo$ is nonnegative and compactly supported, conditions which we now know are preserved by the evolution.

We are interested in the quantity $\int_{|\y|>r}\omega(\y,t)\ \d\y,$ for a given $r>0,$ and its relationship to $\int_{|\y|>r'}\tilo(\y,t)\ \d\y$, for some $r'=r'(r)$.
We will use the relation between these two quantities only for $r$ bounded away from zero; hence let us assume, in what follows, that $r>1$.

We investigate this by using the equation $\omega = G_\alpha \ast \tilo$ and properties P2 and P3:
\[\int_{|\y|>r} \omega(\y,t)\,\d\y = \int_{|\y|>r}\int G_\alpha(\y-\x)\tilo(\x,t)\,\d\x \d\y \]
\[=\int_{|\y|>r}\int_{|\x|\leq r/2} g_\alpha(|\x-\y|)\tilo(\x,t)\,\d\x \d\y + \int_{|\y|>r}\int_{|\x|>r/2} G_\alpha(\y-\x)\tilo(\x,t)\,\d\x \d\y\]
\[\leq c_1 \int_{|\y|>r}\int_{|\x|\leq r/2} e^{-c_2|\x-\y|}\tilo(\x,t)\,\d\x \d\y +
\int_{|\x|>r/2}\tilo(\x,t) \int_{|\y|>r} G_\alpha(\y-\x)\,\d\y \d\x\]
\[\leq c_1 e^{c_2 r/2}\int_{|\x|\leq r/2} \tilo(\x,t)\,\d\x \int_{|\y|>r} e^{-c_2|\y|}\,\d\y +
\int_{|\x|>r/2}\tilo(\x,t) \int_{\real^2} G_\alpha(\y-\x)\,\d\y \d\x \]
\[\leq 2\pi c_1 \left(\frac{1+c_2r}{c_2^2}\right)e^{-c_2r/2}\|\tilo_0\|_{\mathcal{BM}(\real^2)} + \int_{|\x|>r/2}\tilo(\x,t) \,\d\x.\]
We used \eqref{totmass} in the last inequality.

We have hence established that, for some $c>0$,
\begin{equation} \label{filtINTERMSofunfilt}
\displaystyle{\int_{|\y|>r}\omega(\y,t)\, \d\y}  \leq c(1+r)\|\tilo_0\|_{\mathcal{BM}(\real^2)}e^{-cr} +
\displaystyle{\int_{ |\y| > \frac{r}{2}}\tilo(\y,t) \, \d\y}.
\end{equation}

Therefore, the integral of $\tilo$ near infinity controls that of $\omega$, up to an exponentially small error.

Our focus, in this work, is on unfiltered vorticities which are nonnegative, bounded, Radon measures, with compact support. Let $\tilo_0 \in \mathcal{BM}_{+,c}(\real^2)$. Choose a sequence of smooth, nonnegative, compactly supported functions $\tilo_0^n$ converging strongly in $\mathcal{BM}(\real^2)$ to $\tilo_0$. Let $\tilo^n$ be the exact, smooth, solution of the Euler-$\alpha$ equations \eqref{the-equation} with initial unfiltered vorticity $\tilo_0^n$. Then, for $\tilo^n$ we have:
\begin{itemize}
\item $\|\tilo^n(\cdot,t)\|_{L^1(\real^2)} \leq \|\tilo_0\|_{\mathcal{BM}(\real^2)}$;
\item $\u^n=K_\alpha\ast\tilo^n$ is uniformly bounded, with respect to $n$, in $L^\infty((0,T)\times\real^2)$, and
\[\|\u^n(\cdot,t)\|_{L^{\infty}(\real^2)} \leq C\|\tilo_0\|_{\mathcal{BM}(\real^2)};\]
\item $\tilo^n(\cdot,t)$ is compactly supported, for any $t>0$.
\end{itemize}
A standard compactness argument, using that $\u^n$ is bounded in $W^{1-s,r}_{\loc}(\real^2)$ for $s>0$, $r>1$ such that $sr>2(r-1)$, gives
that $\tilo^n \rightharpoonup \tilo$ weak-$\ast$ $L^\infty(0,T;\mathcal{BM}(\real^2))$, with $\tilo$ being the unique weak solution of \eqref{the-equation} with initial data $\tilo_0$, see also \cite{OS2001} for a different approach.

We note that, although relations \eqref{totmass}, \eqref{centermass}, \eqref{mominertia}, \eqref{massfilt}, \eqref{centmassfilt}, \eqref{mominertfilt} were all derived assuming smoothness of $\tilo_0$, it follows by the passage to the limit argument above that these relations hold true for $\tilo_0\in\mathcal{BM}_{+,c}(\real^2)$. Similarly, since the $L^\infty$ estimate for $\u$ depends only on the $L^1$ bound on $\tilo$, which can be substituted by a bound in $\mathcal{BM}$, it follows that $\u$ is bounded even if $\tilo_0$ is only in $\mathcal{BM}(\real^2)$. Finally, similar arguments can be used to extend \eqref{filtINTERMSofunfilt} to $\tilo_0 \in \mathcal{BM}(\real^2)$ as well.

\section{Confinement of vorticity}

Let $\tilo_0$ be a nonnegative, compactly supported, bounded Radon measure. We will show that a weak solution of the Euler-$\alpha$ equations \eqref{the-equation}, having $\tilo_0$ as initial unfiltered vorticity, has support growing no faster than $t^{1/4}|\log t|$.
This fact was proved for weak solutions of the incompressible 2D Euler equations with bounded, compactly supported non-negative vorticity by
D. Iftimie, T. Sideris and P. Gamblin in \cite{ISG1999}. Our proof is an adaptation of their proof; we will often refer to their notation and some of the estimates they have derived, and we will follow their strategy.

As mentioned above, existence and uniqueness of a global-in-time weak solution of the Euler-$\alpha$ equations with $\tilo_0 \in \mathcal{BM}(\real^2)$ was established by M. Oliver and S. Shkoller in \cite{OS2001}. Let $\tilo=\tilo(\x,t)$ denote the unique, global weak solution with initial data $\tilo_0$; from Section 3 we know $\tilo \in L^\infty(0,T;\mathcal{BM}_{+,c}(\real^2))$ .

We are now ready to prove our main result.

\begin{proof}[Proof of Theorem \ref{mainthm}]
The proof consists in an adaptation of the proof of Theorem 2.1 in \cite{ISG1999}. We will begin by explaining the structure of the proof of Theorem 2.1 of \cite{ISG1999}. The idea is to estimate the radial component of velocity; in our case this means showing that the filtered velocity satisfies
\[\left| \u(\x,t) \cdot \frac{\x}{|\x|} \right| \leq \frac{C}{|\x|^3},\]
if $|\x|\geq R(t) \equiv 8R_0 + C[t(\log (2+t)]^{1/4}$.
As in \cite{ISG1999}, this implies that the region $|\x|\leq R(t)$ is invariant under the flow induced by $\u$.

To estimate $|\u \cdot \x|/|\x|$ we write $\u$ as given by the Biot-Savart law in terms of its curl, the {\it filtered} vorticity $\omega$:
\[\u = \u(\x,t) = \int \frac{(\x-\y)^{\perp}}{2\pi |\x - \y|^2} \omega(\y,t)\,\d\y.\]
We note that, although $\tilo(\cdot,t)$ is compactly supported at all times, $\omega(\cdot,t)$ is not. This is a consequence of $\tilo$ being
non-negative. However, $\omega = G^{\alpha}\ast \tilo$ and $G^{\alpha}$ is bounded and exponentially decaying at infinity. This implies that $\omega$ is bounded and belongs to $L^p(\real^2)$ for each $1\leq p \leq \infty$.

As we noted in section 2, the center of mass for both the filtered and unfiltered vorticities are the same, and they are conserved quantities; we may hence assume, without loss of generality, that the common center of mass is at the origin -- otherwise we perform a change of variables to translate the center of mass to the origin. In addition, the moment of inertia for both filtered and unfiltered vorticities is a conserved quantity, which we assume to be initially finite. Just as in [\cite{ISG1999}, equation (5)] we have
\[
\left| \u(\x,t) \cdot \frac{\x}{|\x|} \right| \leq \frac{C}{|\x|^3} + \int_{|\x-\y|<|\x|/2}\;\; \frac{\omega(\y,t)}{|\x-\y|} \,\d\y,
\]
where $C$ depends on the moment of inertia of $\omega$.

Lemma 2.1 of \cite{ISG1999} applies to $\omega$ also, so that we get
\begin{equation} \label{easyestimate}
\left| \u(\x,t) \cdot \frac{\x}{|\x|} \right| \leq \frac{C}{|\x|^3} + C \left(\int_{|\y|>|\x|/2}\; \omega(\y,t) \,\d\y\right)^{1/2},
\end{equation}
with $C$ depending on the $L^{\infty}$-norm of $\tilo_0$, along with the center of mass and the moment of inertia of $\tilo_0$.

The last, and most complicated, step of the proof consists in estimating the mass of filtered vorticity near infinity to show that it gives a negligible contribution for large $|\x|$. This is the content of Proposition 2.1 of \cite{ISG1999}, where it was shown that the mass of the Euler vorticity, near infinity, is smaller than any algebraically decaying function of $|\x|$. The idea is to smoothly approximate the mass of vorticity far from the center of motion and produce a differential inequality to which Gronwall's Lemma can be applied. We will use this idea, but for the mass of the unfiltered vorticity instead. We conclude by using \eqref{filtINTERMSofunfilt}, as the mass of unfiltered vorticity near infinity controls that of the filtered vorticity, up to an exponentially small error.

Following the same strategy as in \cite{ISG1999}, we introduce
\[f_r=f_r(t) = \int \phi_r(\y)\tilo(\y,t)\,\d\y,\]
where
\[\phi_r (\y) = \eta \left(\frac{|\y|^2 - r^2}{\lambda r^2}\right),\;\;\;\mbox{ and } \eta(s) = \frac{e^s}{e^s + 1} .\]
Here, $0 < \lambda = \lambda(r) < 1$ is a parameter which is chosen later.
Recall the main properties of $\eta$:
\begin{enumerate}
\item[(i)] $\eta$ is positive and monotone increasing,
\item[(ii)] $ 0 < \eta^{\prime}(s)\leq \min \{\eta (s) , e^{-|s|} \}$,
\item[(iii)] $|\eta^{\prime\prime}(s)|\leq \eta(s).$
\end{enumerate}
As in \cite{ISG1999} we will estimate $f_r$. We note, as do they, that
\[\frac{1}{2}\int_{|\y|>r}\tilo(\y,t)\,\d\y = \eta(0)\int_{|\y|>r}\tilo(\y,t)\,\d\y < f_r(t).\]
We differentiate $f_r$ in time to find:
\[f_r^\prime(t)=\int \nabla\phi_r(\y)\cdot\u(\y,t) \tilo(\y,t)\,\d\y \]
\[=\frac{1}{2\pi}\int\int \nabla\phi_r(\y)\cdot \frac{(\y-\x)^{\perp}}{|\y-\x|^2}\gamma_\alpha (|\y -\x|) \tilo(\x,t)\,\d\x\, \tilo(\y,t) \, \d\y\]
\[=\frac{1}{\pi}\int\int \eta^\prime\left(\frac{|\y|^2-r^2}{\lambda r^2}\right)  \frac{\y \cdot(\y-\x)^{\perp}}{\lambda r^2 |\x-\y|^2}\gamma_\alpha (|\x -\y|) \tilo(\x,t)\tilo(\y,t)\,\d\x \d\y.\]
Let $L=L(\x,\y)$ and $M=M(\x,\y)$ be defined as follows:
\begin{equation}\label{Lfunction}
L(\x,\y)\equiv \eta^\prime\left(\frac{|\y|^2-r^2}{\lambda r^2}\right)  \frac{\y \cdot(\y-\x)^{\perp}}{\lambda r^2}
\left(\frac{1}{ |\x-\y|^2} - \frac{1}{|\y|^2}\right) \gamma_\alpha (|\x -\y|) \tilo(\x,t)\tilo(\y,t)
\end{equation}
and
\begin{equation}\label{Mfunction}
M(\x,\y)\equiv  -\eta^\prime\left(\frac{|\y|^2-r^2}{\lambda r^2}\right)  \frac{\y \cdot \x^{\perp}}{\lambda r^2}
 \frac{1}{|\y|^2} \left[\, \gamma_\alpha (|\x -\y|) - \gamma_\alpha(|\y|) \,\right] \,\tilo(\x,t)\tilo(\y,t).
\end{equation}

Note that

\begin{eqnarray}
 \nonumber L(\x,\y)+M(\x,\y)= \eta^\prime\left(\frac{|\y|^2-r^2}{\lambda r^2}\right)  \frac{\y \cdot(\y-\x)^{\perp}}{\lambda r^2|\x-\y|^2}
\gamma_\alpha (|\x -\y|) \tilo(\x,t)\tilo(\y,t) \\
\label{LplusMfunction} + \eta^\prime\left(\frac{|\y|^2-r^2}{\lambda r^2}\right)  \frac{\y \cdot \x^{\perp}}{\lambda r^2}
 \frac{1}{|\y|^2}  \gamma_\alpha(|\y|) \,\tilo(\x,t)\tilo(\y,t).
\end{eqnarray}

Since the center of mass of $\tilo(\cdot,t)$ is assumed to vanish we have
\begin{equation}\label{frprimeLplusM}
  f_r^\prime(t)=\frac{1}{\pi} \int \int [L(\x,\y)+M(\x,\y) ]\, \d\x \d\y.
\end{equation}
Recall the decomposition of $\real^2$ used in \cite{ISG1999}:
\begin{eqnarray}
  \Omega_1 &=& \left\{ (\x,\y) \,\Big| \,|\y|<\frac{r}{2} \mbox{ or } |\y|>\frac{3r}{2} \right\} \label{Omega1} \\
  \Omega_2 &=& \left\{ (\x,\y) \,\Big| \, \frac{r}{2} \leq |\y|\leq \frac{3r}{2}, |\x| < \frac{r}{4} \mbox{ or } |\x|>\frac{7r}{4} \right\} \label{Omega2} \\
  \Omega_3 &=& \left\{ (\x,\y) \,\Big| \, \frac{r}{2} \leq |\y|\leq \frac{3r}{2}, \frac{r}{4}\leq |\x|\leq\frac{7r}{4} \right\} \label{Omega3} \\
  \Omega_4 &=& \left\{ (\x,\y) \,\Big| \, \frac{r}{4}< |\y| < \frac{7r}{4}, \frac{r}{4}\leq |\x| \leq \frac{7r}{4}  \right\}. \label{Omega4}
\end{eqnarray}
Then we have: $\real^2=\Omega_1 \cup \Omega_2 \cup \Omega_3$ and the unions are disjoint. In addition, since $\Omega_3 \subset \Omega_4$, we find $\Omega_3=\Omega_4 \setminus (\Omega_4 \setminus \Omega_3)$. Hence it follows that
\begin{eqnarray}
  \nonumber \pi f_r^\prime(t) &=& \int_{\Omega_1} [L(\x,\y)+M(\x,\y) ]\, \d\x \d\y + \int_{\Omega_2} [L(\x,\y)+M(\x,\y) ]\, \d\x \d\y \\ \label{LplusMdecomp} \\
 \nonumber  &+& \int_{\Omega_4} [L(\x,\y)+M(\x,\y) ]\, \d\x \d\y - \int_{\Omega_4\setminus\Omega_3} [L(\x,\y)+M(\x,\y) ]\, \d\x \d\y
\end{eqnarray}

We proceed to estimate each term in \eqref{LplusMdecomp}.

In what follows we will use repeatedly the bound
\begin{equation}\label{ZintermsofIandM}
  \int |\x|\tilo(\x,t)\,\d\x \leq \frac{m_0+i_0}{2}.
\end{equation}

We begin by noting that, on $\Omega_1$, we have
\[\left| \eta^\prime \left(\frac{|\y|^2-r^2}{\lambda r^2}\right) \right| \leq e^{-1/(2\lambda)}.\]
Hence, from the expression for $L+M$ \eqref{LplusMfunction}, we obtain
\begin{eqnarray}
\nonumber \int_{\Omega_1} |L(\x,\y)+M(\x,\y) |\, \d\x \d\y \leq & \ds{\frac{1}{e^{1/(2\lambda)}\lambda r^2} \int_{\Omega_1}  \left(\frac{\gamma_\alpha(|\x-\y|)}{|\x-\y|} \,|\y|\,\tilo(\y,t)\,\tilo(\x,t) \right.}\\
\nonumber \\
\nonumber & \ds{ \left. +\frac{\gamma_\alpha(|\y|)}{|\y|} \,|\x|\,\tilo(\x,t)\,\tilo(\y,t)\right)\, \d\y \d\x} \\
\nonumber \\
\leq & \ds{\frac{1}{e^{1/(2\lambda)}\lambda r^2} C\,m_0(m_0+i_0)}, \label{Omega1est}
\end{eqnarray}
where we have used, also, item 1 of Lemma \ref{gammaalphaprops},  that $\gamma_\alpha$, $\tilo \geq 0$ and \eqref{ZintermsofIandM}.

Next, as in \cite{ISG1999} we observe that $\Omega_4\setminus\Omega_3 \subset \Omega_1$, and, therefore, we also have
\begin{equation}\label{Omega4setminusOmega3est}
\int_{\Omega_4\setminus\Omega_3} |L(\x,\y)+M(\x,\y) |\, \d\x \d\y \leq  \frac{1}{e^{1/(2\lambda)}\lambda r^2} C\,m_0(m_0+i_0).
\end{equation}

Let us proceed with the analysis on $\Omega_2$. Here we will first estimate the integral of $L$ and then of $M$. From the definition of $L=L(\x,\y)$ we have:
\begin{eqnarray}
\nonumber \ds{\int_{\Omega_2} |L(\x,\y)|\, \d\x \d\y} & = \ds{\int_{\Omega_2}
\eta^\prime\left(\frac{|\y|^2-r^2}{\lambda r^2}\right)  \frac{|\y \cdot \x^{\perp}}{\lambda r^2}
 \frac{|\x\cdot(2\y-\x)|}{ |\x-\y|^2 |\y|^2}\gamma_\alpha (|\x -\y|) \tilo(\x,t)\tilo(\y,t) \, \d\x \d\y} \\
\nonumber \\
\nonumber & \leq \ds{\int_{\Omega_2} \eta \left(\frac{|\y|^2-r^2}{\lambda r^2}\right) \frac{|\y||\x|}{\lambda r^2}
\frac{|\x|\cdot 7|\x-\y|}{|\x-\y|^2|\y|^2} \gamma_\alpha(|\x-\y|)\tilo(\x,t)\tilo(\y,t)\,\d\x \d\y }\\
\nonumber \\
& \leq \ds{C\frac{i_0f_r(t)}{\lambda r^4}}. \label{Omega2estL}
\end{eqnarray}
We have used, above, property (ii) of $\eta$, together with the facts that $|2\y-\x|\leq 7|\x-\y|$ in $\Omega_2$, and $|\x-\y|$, $|\y| > Cr$ in $\Omega_2$. We also used \eqref{gammaalphabdd} and the definition of $f_r(t)$.

Next we estimate the integral of $M$ on $\Omega_2$. We find:
\begin{eqnarray}
\nonumber & \ds{ \int_{\Omega_2} |M(\x,\y)|\, \d\x \d\y } \\
\nonumber \\
\nonumber & \leq \ds{\int_{\Omega_2} \eta^\prime\left(\frac{|\y|^2-r^2}{\lambda r^2}\right)  \frac{|\y \cdot \x^{\perp}|}{\lambda r^2}
 \frac{1}{|\y|^2} \left[\, |\gamma_\alpha (|\x -\y|) -1| \right.}\\
 \nonumber \\
 \nonumber & \ds{\left.+|1- \gamma_\alpha(|\y||) \,\right] \,\tilo(\x,t)\tilo(\y,t) \, \d\x \d\y }\\
 \nonumber \\
& \leq \ds{ C\frac{(m_0+i_0)^2}{\lambda r^4}e^{-Cr}}. \label{Omega2estM}
\end{eqnarray}
Here we used item 2 of Lemma \ref{gammaalphaprops}, \eqref{ZintermsofIandM}, along with $|\x-\y|$, $|\y|>Cr$ in $\Omega_2$.

We now need to analyze $L+M$ in $\Omega_4$. This term is the most delicate because it corresponds to the near-field estimate, where $\x$ and $\y$ are close together. Let us start with the integral of $L$ in $\Omega_4$. Since $\Omega_4$ is symmetric with respect to $\x$ and $\y$ we have
\begin{equation} \label{Lsymmetrize}
\int_{\Omega_4} L(\x,\y) \, \d\x \d\y = \frac{1}{2} \int_{\Omega_4} [L(\x,\y) + L(\y,\x)]\, \d\x \d\y.
\end{equation}
We rewrite $L(\x,\y)+L(\y,\x)$ as below:
\[L(\x,\y)+L(\y,\x) = L_1(\x,\y)+L_2(\x,\y),
\]
where
\begin{eqnarray}
\nonumber
& L_1(\x,\y)= \ds{\frac{1}{\lambda r^2}
\left[
\eta^\prime \left(\frac{|\y|^2-r^2}{\lambda r^2} \right) \y\cdot (\x-\y)^\perp
-   \eta^\prime \left(\frac{|\x|^2-r^2}{\lambda r^2} \right)\x\cdot (\x-\y)^\perp
\right]} \\
& \times \ds{\left(
\frac{1}{|\x-\y|^2} - \frac{1}{|\y|^2}
\right)
\gamma_\alpha(|\x-\y|)\tilo(\x,t)\tilo(\y,t) } \label{L1function}
\end{eqnarray}
and
\begin{equation}\label{L2function}
L_2(\x,\y)= \frac{1}{\lambda r^2}
\eta^\prime \left(\frac{|\x|^2-r^2}{\lambda r^2} \right)\x\cdot (\x-\y)^\perp
\left(
\frac{1}{|\x|^2} - \frac{1}{|\y|^2}
\right)
\gamma_\alpha(|\x-\y|)\tilo(\x,t)\tilo(\y,t).
\end{equation}

Now,  $\y\cdot (\x-\y)^\perp  = \x\cdot (\x-\y)^\perp$ gives
\begin{eqnarray*}
L_1(\x,\y)=  \ds{\frac{1}{\lambda r^2}
\left[
\eta^\prime \left(\frac{|\y|^2-r^2}{\lambda r^2} \right)
-   \eta^\prime \left(\frac{|\x|^2-r^2}{\lambda r^2} \right)
\right]}\x\cdot (\x-\y)^\perp \\
\ds{\times \left( \frac{1}{|\x-\y|^2} - \frac{1}{|\y|^2} \right) \gamma_\alpha(|\x-\y|)\tilo(\x,t)\tilo(\y,t)}.
\end{eqnarray*}

If we follow the same estimates performed for expression (15) in \cite{ISG1999} we conclude that, in $\Omega_4$,
\begin{equation} \label{L1functionest}
|L_1(\x,\y)| \leq \frac{C}{\lambda^2r^2}\left[
\eta \left(\frac{|\y|^2-r^2}{\lambda r^2} \right)
+   \eta \left(\frac{|\x|^2-r^2}{\lambda r^2} \right)
\right]\gamma_\alpha(|\x-\y|)\tilo(\x,t)\tilo(\y,t).
\end{equation}
Therefore, since $0 < \gamma_\alpha \leq 1$,
\begin{equation} \label{Omega4estL1}
\int_{\Omega_4} |L_1(\x,\y)|\,\d\x \d\y \leq \frac{ Ci_0}{\lambda ^2 r^4}f_r(t).
\end{equation}
Similarly, the estimates in the expression (14) in \cite{ISG1999} lead to
\begin{equation} \label{Omega4estL2}
\int_{\Omega_4} |L_2(\x,\y)|\,\d\x \d\y \leq \frac{Ci_0 }{\lambda ^2 r^4}f_r(t).
\end{equation}

Lastly, we must analyze $M$ in $\Omega_4$. We have
\begin{equation}\label{MfunctionestA}
|M(\x,\y)| \leq \eta \left(\frac{|\y|^2-r^2}{\lambda r^2}\right)
 \frac{1}{\lambda r^2|\y|^2} |\y \cdot \x^{\perp}|\left|\, \gamma_\alpha (|\x -\y|) - \gamma_\alpha(|\y|) \,\right| \,\tilo(\x,t)\tilo(\y,t).
\end{equation}
Now
\[
|\y \cdot \x^{\perp}|\left|\, \gamma_\alpha (|\x -\y|) - \gamma_\alpha(|\y|) \,\right| =
|\y \cdot \x^{\perp}|\left|\, (\gamma_\alpha (|\x -\y|) - 1) +(1 - \gamma_\alpha(|\y|)) \,\right|
\]
\[
\leq |(\y - \x) \cdot \x^{\perp}||\gamma_\alpha (|\x -\y|) - 1| +
|\y \cdot \x^{\perp}|
|1 - \gamma_\alpha(|\y|)|
\]
\[
\leq |\x|\,\left[\,|\x - \y||\gamma_\alpha (|\x -\y|) - 1| +
|\y|
|1 - \gamma_\alpha(|\y|)|\, \right]
\]
\[
\leq C|\x|,
\]
by Lemma \ref{gammaalphaprops}, item 2.

Therefore, since $|\y| > r/4$,
\begin{equation}\label{MfunctionestB}
|M(\x,\y)| \leq \frac{C}{\lambda r^4} \eta \left(\frac{|\y|^2-r^2}{\lambda r^2}\right)
 |\x| \,\tilo(\x,t)\tilo(\y,t).
\end{equation}
It follows, using \eqref{ZintermsofIandM} and \eqref{MfunctionestB}, that
\begin{equation} \label{Omega4estM}
\int_{\Omega_4} |M(\x,\y)|\,\d\x \d\y \leq \frac{C(m_0+i_0)}{\lambda r^4} f_r(t).
\end{equation}

Putting together \eqref{Omega1est}, \eqref{Omega4setminusOmega3est}, \eqref{Omega2estL}, \eqref{Omega2estM}, \eqref{Omega4estL1}, \eqref{Omega4estL2} and \eqref{Omega4estM} leads to
\begin{equation}\label{differentialineq}
  \pi f_r^\prime (t) \leq \frac{C\,m_0(m_0+i_0)}{e^{1/(2\lambda)}\lambda r^2}  +
  \frac{C(m_0+i_0)^2}{\lambda r^4}e^{-Cr} + \left( \frac{Ci_0 }{\lambda ^2 r^4} + \frac{C(m_0+i_0)}{\lambda r^4}\right) f_r(t).
\end{equation}

Recall, from \cite{ISG1999}, that, if $r > 2R_0$, then
\begin{equation} \label{frofzero}
f_r(0) \leq \frac{m_0}{e^{1/(2\lambda)}}.
\end{equation}

Our choice of $\lambda$ will include the condition $\lambda < 1$.

Let
\[C_1 = \frac{1}{\pi}\max\{Cm_0(m_0+i_0), \, C(m_0+i_0)^2,\, 2Ci_0, \, 2C(m_0+i_0)\}.
\]

Then \eqref{differentialineq} becomes
\begin{equation}\label{differentialineqNew}
  f_r^\prime (t) \leq \frac{C_1}{\lambda^2 r^4} (\lambda r^2 e^{-1/(2\lambda)} + \lambda e^{-Cr}) + \frac{C_1}{\lambda^2 r^4} f_r(t).
\end{equation}

From \eqref{frofzero}, \eqref{differentialineqNew} and Gronwall's lemma we conclude that, for $r > 2R_0$,

\begin{equation} \label{froftestA}
f_r(t) \leq \left( m_0 + \lambda r^2 + \lambda \exp\left\{\frac{1}{2\lambda}  -Cr \right\}\right) \exp\left\{\frac{C_1t}{\lambda^2 r^4} - \frac{1}{2\lambda} \right\}.
\end{equation}

As in \cite{ISG1999} we choose, for each $k \in \mathbb{N}$,
\[\lambda = \left[ 4(k+2) \log \left(\frac{r}{R_0}\right) \right]^{-1}.\]

Then, if $t\leq \lambda r^4/(4C_1)$, for $\lambda$ above, we obtain
\[f_r(t) \leq  C(r) \frac{1}{r^k},\]
where
\[\hspace{-3cm} C(r)= m_0 \frac{R_0^{k+2}}{r^2}  + \frac{R_0^{k+2}}{4(k+2)\log(r/R_0)} \]
\[\hspace{2cm} + \frac{R_0^{k+2}}{4r^2(k+2)\log(r/R_0)}
\exp\left \{2(k+2)\log\left(\frac{r}{R_0}\right)-Cr\right \}.
\]
Clearly, if $r > 2R_0$ then $C(r)$ is bounded by, say, some constant $C_2>0$.

Following \cite{ISG1999} we find that, if $r > 2R_0 + C_3 [t\log(2+t)]^{1/4}$, for some $C_3$ sufficiently large, then
\[f_r(t)\leq \frac{C_3}{r^k}.\]

Choosing $k=6$ and putting this together with \eqref{easyestimate} and \eqref{filtINTERMSofunfilt}, noting that the exponentially decaying error in \eqref{filtINTERMSofunfilt} may also be estimated as $\mathcal{O}(r^{-6})$, leads to the conclusion, similarly to \cite{ISG1999}, that
$\{r\leq 8R_0 + C_3[t\log(2+t)]^{1/4}\}$ is invariant under the flow, as desired. This concludes the proof.

\end{proof}

\section{Conclusions}

As pointed out in the Introduction, the Euler-$\alpha$ system is closely related to the vortex blob method. This is a numerical method introduced by A. Chorin in \cite{Chorin1973}; for a precise description of the implementation of the vortex blob method and a state-of-the-art convergence result see \cite{XinLiu1995}. We will briefly recall the construction.

Choose a radially symmetric blob function $\phi \in C^{\infty}(\real^2)$ satisfying
\[
\phi \geq 0; \hspace{.2cm} \int_{\real^2}\phi\,dx=1; \hspace{.2cm} \int_{|x| \leq 1} \phi(x)\,dx\geq \frac{1}{2}.
\]

We will need to assume exponential decay of the blob function, i.e., that there exist  $C_1$, $C_2>0$  such that   $|\phi (x)|\leq C_1 e^{-C_2 x}$  for all $x\in\real^2$.

For each $\vare > 0$ set $\phi_\vare = \phi_\vare (x) = \vare^{-2}\phi(\vare^{-1}x)$. Denote $K_\vare = K_\vare(x) = K \ast \phi_\vare (x)$, the regularized Biot-Savart kernel.

Let $\omega_0 \in \mathcal{BM}_c(\real^2)$. Consider a discretization of the support of $\omega_0$ by non-overlapping squares $Q_i$ with sides of length $1/N$ and centers $X_{i,0}$, $i=1,\ldots,M$, where $M=M(N)$ is the number of squares. Write $m_i = \int_{Q_i} \omega_0 \,dx$. The vortex blob approximation consists in first solving the ODEs
\begin{equation} \label{vortblobode}
\left\{
\begin{array}{l}
 \ds{\frac{d}{dt}} X_i = \sum_{j=1}^M m_j K_\vare (X_i-X_j) \\
  X_i (t=0) = X_{i,0},
\end{array}
\right.
\end{equation}
i = 1,\ldots,M. Note that $X_i=X_i(t)$ also depend on $\vare$ and on $N$.

We then write
\[  \omega_\vare \equiv \sum_j m_j \delta_{X_j(t)};\]
\[\omega^\vare = \omega^\vare (x,t) \equiv \sum_j m_j \phi_\vare(x-X_j(t));\]
\[u^\vare = u^\vare (x,t) \equiv \sum_j m_j K_\vare (x-X_j(t)).\]

With this notation \eqref{vortblobode} becomes the PDE
\[\partial_t \omega_\vare + u^\vare\cdot\nabla\omega_\vare.\]
The analogy with the vorticity formulation of the Euler-$\alpha$ equations can be seen by substituting $u^\vare$ by $\u$ and $\omega_\vare$ by $\tilo$.

It is known and easy to verify that total mass, center of mass and moment of inertia are conserved for both $\omega_\vare$ and $\omega^\vare$, i.e.
\begin{eqnarray*}
\nonumber \int \, \d \omega_\vare (x,t) & = &\sum_i m_i \equiv C_1 \\
\nonumber \int \x \,\d \omega_\vare (x,t)  & = & \sum_i  m_i X_i(t) \equiv C_2 \\
\nonumber \int |\x|^2 \, \d \omega_\vare(x,t)  & = & \sum_i  m_i |X_i(t)|^2 \equiv C_3 \\
\nonumber \int \omega^\vare (\cdot,t)\, dx & = & \sum_i m_i \equiv C_1 \\
\nonumber \int \x \omega^\vare (\cdot,t)\, dx & = & \sum_i  m_i X_i(t) \equiv C_2 \\
\nonumber \int |\x|^2 \omega^\vare(\cdot,t)\,dx & = & \sum_i  m_i |X_i(t)|^2 + \sum_i m_i \vare^2 \int |x|^2\phi(x)\,dx \equiv C_3 + \vare^2 C_4.
\end{eqnarray*}

If the initial vorticity $\omega_0$ has distinguished sign and compact support, then the support of $\omega_\vare$ is contained in a fixed disk, for all time. Indeed, this is an immediate consequence of the conservation of (the discrete) moment of inertia; the approximations never leave a disk of fixed radius. However, this radius depends on the discretization of the initial vorticity and the estimate is lost as $N \to \infty$. A straightforward adaptation of Theorem \ref{mainthm} provides a confinement estimate for the vortex blob approximations of $\omega_0 \in \mathcal{BM}_{c,+}(\real^2)$, uniform with respect to $N$, for fixed $\vare$, and for any exponentially decaying blob function $\phi$. The most common blob function in the literature is the algebraically decaying {\it Krasny blob}, $\phi(x)= 1/[\pi(1+|x|^2))^2]$. The proof of Theorem \ref{mainthm} does not apply to the Krasny blob; exponential decay near infinity is needed to show that the mass of the vortex blob $\omega^\vare$  near infinity gives a negligible contribution to the transporting radial velocity $u^\vare \cdot x/|x|$, as discussed in section 4.

We close by mentioning a couple of open problems that arise from this work. First, it would be interesting to extend the confinement results to the Krasny blob approximation. Second, it is natural to seek a localization result, in the spirit of \cite{MP93} and \cite{CGM2013}, for the Euler-$\alpha$ equations.

\vspace{.5cm}

\footnotesize{
{\em Acknowledgments.} D.M.A. was supported in part by the National Science Foundation under Grant No. DMS-1515849.
M.C.L.F. was partially supported by CNPq grant \#306886/2014-6 and FAPERJ grant \#202.999/2017. H.J.N.L.'s research was supported in part by CNPq grant \#307918/2014-9 and FAPERJ grant \#202.950/2015. This material is based upon work supported by the National Science Foundation under Grant No. DMS-1439786 while the authors were in residence at the Institute for Computational and Experimental Research in Mathematics (ICERM) in Providence, RI, during the Spring 2017 semester.

}

\end{document}